\newcommand{\CC}{\mathbb C}
\newcommand{\DD}{\mathbb D}
\newcommand{\RR}{\mathbb R}
\renewcommand{\hat}{\widehat}
\renewcommand{\tilde}{\widetilde}
\newtheorem{theorem}{Theorem}[section]
\newtheorem{corollary}[theorem]{Corollary}
\newtheorem{lemma}[theorem]{Lemma}
\theoremstyle{remark}
\numberwithin{equation}{section}
\begin{document}

\title{Analytic capacity and holomorphic motions}

\author[S. Pouliasis]{Stamatis Pouliasis}
\address{Department of Mathematics and Statistics, Texas Tech University, Lubbock, TX 79409, USA}
\email{stamatis.pouliasis@ttu.edu}

\author[T. Ransford]{Thomas Ransford}
\address{D\'epartement de math\'ematiques et de statistique, Universit\'e Laval, Qu\'ebec (Qu\'ebec),  G1V 0A6, Canada.}
\email{thomas.ransford@mat.ulaval.ca}

\author[M. Younsi]{Malik Younsi}
\address{Department of Mathematics, University of Hawaii Manoa, Honolulu, HI 96822, USA}
\email{malik.younsi@gmail.com}

\subjclass[2010]{Primary 30C85, Secondary 31A15, 37F99}
\keywords{Analytic capacity, holomorphic motion, harmonic function.}
\thanks{The second author was supported by grants from NSERC and the Canada Research Chairs program.
The third author was supported by NSF Grant DMS-1758295}

\begin{abstract}
We study the behavior of the analytic capacity of a compact set under
deformations obtained by families of conformal maps depending holomorphically
on the complex parameter. 
We show that, under those deformations, the logarithm of the analytic capacity varies harmonically.
We also show that the hypotheses in this result cannot be substantially weakened.
\end{abstract}

\maketitle

\section{Introduction and statement of results}

The \emph{analytic capacity} of a compact set $K\subset\CC$ is defined by
$$\gamma(K):=\sup_{g}|g'(\infty)|,$$
where the supremum is taken over all holomorphic functions
$g:\hat{\CC}\setminus K\to\DD$.
Here $\DD$ denotes the open unit disk,
and  $g'(\infty)$ is defined by
\[
g'(\infty):=\lim_{z\to\infty}z(g(z)-g(\infty)).
\]
Analytic capacity was introduced by Ahlfors \cite{Ah47} in connection with
the Painlev\'e problem of characterizing removable singularities for
bounded holomorphic functions. For more information
on this subject, see the books of Garnett  \cite{Ga72}  and Tolsa  \cite{To14}.

The precise value of the analytic capacity is known only for
a relatively small class of compact sets, satisfying rather restrictive
geometric or connectivity properties. This  leads us to seek  techniques
for estimating  analytic capacity. 
One such technique was developed in  \cite{YR13},
using numerical methods.
In this note we obtain estimates via
a result on the variation of the analytic capacity
of a compact set that depends holomorphically on a parameter.

The prototype for our results is an old result from a paper of Yamaguchi \cite{Ya73}
concerning the  logarithmic capacity $c(K)$.
Yamaguchi's result states that, 
if $\lambda\mapsto K_\lambda$ is an analytic multifunction defined on a 
domain $D$, then $\lambda\mapsto\log c(K_\lambda)$ is a subharmonic function on~$D$.
For a brief introduction to analytic multifunctions, see Chapter~VII of Aupetit's book \cite{Au91}.
Yamaguchi's theorem is proved in \cite[Theorem~7.1.3]{Au91}.

It turns out that the analogous
result for analytic capacity is false. 
We do not stop here to give an example, since we shall establish a better result
in Theorem~\ref{T:thm2} below.
Thus, in order to find a substitute of  Yamaguchi's result for analytic capacity, 
we need to consider a more restricted notion of holomorphic variation of sets.

The appropriate notion is that of a holomorphic motion.
Given a subset $A$ of the Riemann sphere $\hat{\CC}$,
a \emph{holomorphic motion} of $A$ is a map
$f:\DD\times A\mapsto\hat{\CC}$
such that:
\begin{enumerate}[(i)]
\item for each fixed $z\in A$, the map $\lambda\mapsto f(\lambda,z)$
is holomorphic on $\DD$,
\item for each fixed $\lambda\in\DD$, the map $z\mapsto f(\lambda,z)$ is 
injective on $A$,
\item$f(0,z)=z$ for all $z\in A$.
\end{enumerate}

It is a remarkable fact, first established by  S\l odkowski  in \cite{Sl91}, 
that every holomorphic motion $f:\DD\times A\to\hat{\CC}$ 
admits an extension to map $f:\DD\times\hat{\CC}\to\hat{\CC}$
that is a holomorphic motion of $\hat{\CC}$.
For another proof of this result, 
as well as more background on holomorphic motions, see \cite{AM01}. 
Though we do not use this theorem directly, 
it does serve to motivate our consideration of  holomorphic motions of $\hat{\CC}$.

In what follows, we write $f_\lambda(z):=f(\lambda,z)$.
The following theorem is our first result.

\begin{theorem}\label{T:thm1}
Let $K$ be a compact subset of $\CC$ such that $\gamma(K)>0$.
Let $f:\DD\times \hat{\CC}\to\hat{\CC}$
be a holomorphic motion  such that, for each $\lambda\in\DD$, the map
$f_\lambda$ is holomorphic
on $\hat{\CC}\setminus K$  and satisfies $f_\lambda(\infty)=\infty$.
Then, writing $K_\lambda:=f_\lambda(K)$, we have that $\lambda\mapsto\log \gamma(K_\lambda)$ is a harmonic function on $\DD$.
\end{theorem}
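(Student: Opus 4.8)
The plan is to reduce the harmonicity statement to the elementary fact that $\log|a|$ is harmonic for every zero-free holomorphic function $a$. The bridge is a scaling identity: although analytic capacity is not a conformal invariant, it transforms very simply under a conformal map of the complement of $K$ that fixes $\infty$, namely it gets multiplied by the modulus of the leading Laurent coefficient of that map at $\infty$.

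First I would set up the geometry. For each $\lambda$, property~(ii) makes $f_\lambda$ injective, so $f_\lambda$ restricted to the open set $\hat\CC\setminus K$ is a conformal map onto its image; and since each $f_\lambda$ is a homeomorphism of $\hat\CC$ (a standard feature of holomorphic motions of the sphere, also obtainable from invariance of domain), that image is exactly $\hat\CC\setminus K_\lambda$. In particular $\infty=f_\lambda(\infty)\notin K_\lambda$, so $K_\lambda$ is a compact subset of $\CC$, and near $\infty$ we may write $f_\lambda(z)=a(\lambda)z+b(\lambda)+O(1/z)$ with $a(\lambda)\ne 0$. To see that $\lambda\mapsto a(\lambda)$ is holomorphic and zero-free on $\DD$, pass to the coordinate $u=1/z$: the map $h_\lambda(u):=1/f_\lambda(1/u)$ satisfies $h_\lambda(0)=0$ and $h_\lambda'(0)=1/a(\lambda)$, is holomorphic in $\lambda$ for each fixed $u$ by property~(i), and is injective by property~(ii). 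Expressing $h_\lambda'(0)$ as a Cauchy integral over a small fixed circle shows it is holomorphic in $\lambda$, and it is nonzero because $h_\lambda$ is injective; hence $a(\lambda)=1/h_\lambda'(0)$ is holomorphic and zero-free.

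The heart of the argument is the identity $\gamma(K_\lambda)=|a(\lambda)|\,\gamma(K)$. Since $\gamma(K)>0$, the extremal function $g_K$ in the definition of $\gamma(K)$ exists (by a normal families argument) and may be normalized so that $g_K(\infty)=0$ and $|g_K'(\infty)|=\gamma(K)$. Then $g_K\circ f_\lambda^{-1}$ maps $\hat\CC\setminus K_\lambda$ holomorphically into $\DD$, and comparing Laurent expansions at $\infty$ gives $(g_K\circ f_\lambda^{-1})'(\infty)=a(\lambda)\,g_K'(\infty)$, so $\gamma(K_\lambda)\ge|a(\lambda)|\,\gamma(K)>0$. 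In particular $K_\lambda$ has positive analytic capacity, hence its own extremal function $g_{K_\lambda}$; feeding $g_{K_\lambda}\circ f_\lambda$ as a competitor into the definition of $\gamma(K)$ gives, by the same computation, $\gamma(K)\ge\gamma(K_\lambda)/|a(\lambda)|$. Combining the two inequalities yields the identity.

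It then follows that $\log\gamma(K_\lambda)=\log\gamma(K)+\log|a(\lambda)|$ on $\DD$, and the right-hand side is harmonic because $a$ is holomorphic and zero-free. The main point — and the step I expect to require the most care — is establishing the scaling identity $\gamma(K_\lambda)=|a(\lambda)|\,\gamma(K)$; once it is available, the holomorphic-motion hypothesis enters only through the holomorphy and non-vanishing of the single function $a(\lambda)$, and harmonicity is automatic. A secondary technical point, needed to make $f_\lambda^{-1}$ available on $\hat\CC\setminus K_\lambda$, is the fact that $f_\lambda$ is a homeomorphism of $\hat\CC$, which is part of the standard theory of holomorphic motions.
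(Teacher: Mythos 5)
Your overall route is the same as the paper's: the scaling identity $\gamma(K_\lambda)=|a(\lambda)|\,\gamma(K)$ is precisely the paper's lemma on how analytic capacity transforms under a conformal map of the complement fixing $\infty$ (proved there, as by you, by composing competitors with $f_\lambda$ and $f_\lambda^{-1}$; using extremal functions instead of suprema over competitors is immaterial), and the final step $\log\gamma(K_\lambda)=\log\gamma(K)+\log|a(\lambda)|$ with $a$ holomorphic and zero-free is the paper's conclusion as well. Your remark that $f_\lambda(\hat\CC\setminus K)=\hat\CC\setminus K_\lambda$ and the non-vanishing of $a(\lambda)$ via injectivity are fine.

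The gap is in your justification that $a(\lambda)$ is holomorphic, and it has two parts. First, the ``Cauchy integral over a small fixed circle'' for $h_\lambda'(0)$, where $h_\lambda(u)=1/f_\lambda(1/u)$, is not always legitimate: $h_\lambda$ has a pole at the unique $u$ with $f_\lambda(1/u)=0$, and for admissible motions this pole can land on any prescribed circle $|u|=r<1/\max_{z\in K}|z|$ (for example $f(\lambda,z)=z+c\lambda$ with $|c|>1/r$ and $|\lambda|=1/(|c|r)$). This is repaired by not inverting: since $f_\lambda$ is injective and fixes $\infty$, $f(\lambda,z)$ is finite on $|z|=R>\max_{z\in K}|z|$, and $a(\lambda)=\frac{1}{2\pi i}\int_{|z|=R}f(\lambda,z)z^{-2}\,dz$, which is exactly the representation the paper uses. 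Second, and more substantively, holomorphy in $\lambda$ of such an integral does not follow merely from holomorphy of $\lambda\mapsto f(\lambda,z)$ for each fixed $z$ on the circle: to run Morera/Fubini or differentiate under the integral you need some uniformity in $(\lambda,z)$, such as local boundedness or joint continuity. This is exactly where the paper invokes the Ma\~n\'e--Sad--Sullivan $\lambda$-lemma (joint continuity of holomorphic motions); without it, or an equivalent normal-families argument, your assertion that the Cauchy integral ``shows it is holomorphic in $\lambda$'' is unjustified. With the uninverted integral and the $\lambda$-lemma added, your argument coincides with the paper's proof.
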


Combining this result with Harnack's inequality for positive harmonic functions, 
we immediately obtain the following two-sided estimate for the analytic capacity of $K_\lambda$.

\begin{corollary}\label{C:thm1}
Assume, in addition, that $\gamma(K_\lambda)\le M$  for all $\lambda\in\DD$.
Then
\[
\frac{1-|\lambda|}{1+|\lambda|}
\le \frac{\log (M/\gamma(K_\lambda))}{\log (M/\gamma(K))}
\le \frac{1+|\lambda|}{1-|\lambda|}
\quad(\lambda\in\DD).\qed
\]
\end{corollary}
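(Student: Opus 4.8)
The plan is to reduce the stated two-sided estimate to the classical Harnack inequality for nonnegative harmonic functions, applied to an auxiliary function manufactured from Theorem~\ref{T:thm1}. First I would introduce
\[
v(\lambda):=\log\bigl(M/\gamma(K_\lambda)\bigr)=\log M-\log\gamma(K_\lambda)\qquad(\lambda\in\DD).
\]
By Theorem~\ref{T:thm1}, $\lambda\mapsto\log\gamma(K_\lambda)$ is harmonic on $\DD$, and subtracting it from the constant $\log M$ leaves $v$ harmonic on $\DD$. Moreover, the additional hypothesis $\gamma(K_\lambda)\le M$ gives $M/\gamma(K_\lambda)\ge 1$, so that $v(\lambda)\ge 0$ for every $\lambda\in\DD$. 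Thus $v$ is a nonnegative harmonic function on the disk, which is exactly the setting in which Harnack's inequality applies.

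Next I would invoke Harnack's inequality in its standard form for the unit disk: for any nonnegative harmonic function $v$ on $\DD$,
\[
\frac{1-|\lambda|}{1+|\lambda|}\,v(0)\le v(\lambda)\le\frac{1+|\lambda|}{1-|\lambda|}\,v(0)
\qquad(\lambda\in\DD).
\]
To identify $v(0)$, note that property~(iii) in the definition of a holomorphic motion gives $f_0=\mathrm{id}$, whence $K_0=f_0(K)=K$ and therefore $v(0)=\log\bigl(M/\gamma(K)\bigr)$. Dividing the displayed chain of inequalities through by $v(0)$ then yields precisely
\[
\frac{1-|\lambda|}{1+|\lambda|}
\le\frac{\log\bigl(M/\gamma(K_\lambda)\bigr)}{\log\bigl(M/\gamma(K)\bigr)}
\le\frac{1+|\lambda|}{1-|\lambda|}
\qquad(\lambda\in\DD),
\]
which is the assertion of the corollary.

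There is no substantial analytic obstacle here; the entire content is carried by Theorem~\ref{T:thm1} together with Harnack's inequality. The only points requiring a word of care are the verification that $v$ is nonnegative on the \emph{whole} disk, which is where the uniform bound $\gamma(K_\lambda)\le M$ is genuinely used, and the nondegeneracy needed to divide by $v(0)$. For the latter I would observe that $\gamma(K)\le M$ forces $v(0)\ge 0$, and that the stated quotient presupposes $\gamma(K)<M$, i.e.\ $v(0)>0$; in the degenerate case $v(0)=0$ the minimum principle (or Harnack itself) forces $v\equiv 0$, so $\gamma(K_\lambda)\equiv M$ and the estimate is vacuous. Under the standing assumption $v(0)>0$ the division is legitimate and the proof is complete.
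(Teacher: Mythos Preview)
Your argument is correct and is exactly the approach the paper indicates: the authors simply say that the corollary follows by combining Theorem~\ref{T:thm1} with Harnack's inequality for positive harmonic functions, and your proof spells this out in detail. Your handling of the degenerate case $v(0)=0$ is a reasonable addendum, though the paper does not comment on it.
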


Theorem~\ref{T:thm1} yields a stronger conclusion than Yamaguchi's theorem 
(harmonic versus subharmonic), but it also requires a much stronger hypothesis.
It is natural to ask whether the hypothesis can be weakened. In particular,
is it possible to omit the assumption that $f_\lambda$ be holomorphic
on $\hat{\CC}\setminus K$? Our second result answers this question in the negative.

\begin{theorem}\label{T:thm2}
There exist a holomorphic motion $f:\DD\times\hat{\CC}\to\hat{\CC}$, satisfying 
$f_\lambda(\infty)=\infty$ for all $\lambda\in\DD$, and a compact subset $K$ of $\CC$
with $\gamma(K)>0$, such that, if we set $K_\lambda:=f_\lambda(K)$, 
then the functions $\lambda\mapsto\gamma(K_\lambda)$ and  $\lambda\mapsto\log \gamma(K_\lambda)$ 
are neither subharmonic nor superharmonic on $\DD$.
\end{theorem}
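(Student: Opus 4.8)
The plan is to build the example from a holomorphic motion whose restriction to $K$ makes each $K_\lambda$ a union of two far‑apart round disks with holomorphically varying radii and separation, and then to read off the behaviour of $\gamma(K_\lambda)$ from that of the analytic capacity $\Phi(\rho_1,\rho_2,d)$ of two disjoint closed disks of radii $\rho_1,\rho_2$ at centre–distance $d$. Concretely, I would fix a small $\epsilon>0$ and a large $M$, put $e_1(\lambda):=\lambda^2-\lambda+1$ (zero‑free on $\DD$, with its only critical point at $\lambda_*:=1/2$) and $g(\lambda):=M+\lambda$ (zero‑free on $\DD$), and take $K$ to be a union of two disjoint closed disks of radius $\epsilon$ whose centres differ by $g(0)=M$. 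Define $f$ on $K\cup\{\infty\}$ by $f_\lambda(\infty)=\infty$ and by letting $f_\lambda$ carry the two disks onto $\overline{D(c_1(\lambda),\epsilon|e_1(\lambda)|)}$ and $\overline{D(c_2(\lambda),\epsilon)}$ via affine maps of multipliers $e_1(\lambda)$ and $1$, with $c_1(\lambda)-c_2(\lambda)=g(\lambda)$. Then $f_0=\mathrm{id}$, each $f_\lambda$ is injective, and for $M$ large and $\epsilon$ small the two image disks stay disjoint for every $\lambda\in\DD$, so $f$ is a holomorphic motion of $K\cup\{\infty\}$; by S\l odkowski's extension theorem it extends to a holomorphic motion of $\hat\CC$ still fixing $\infty$ (and not holomorphic on $\hat\CC\setminus K$, since the two affine pieces have different multipliers, so Theorem~\ref{T:thm1} does not apply). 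Then $K_\lambda$ is the union of $\overline{D(c_1(\lambda),\rho_1(\lambda))}$ and $\overline{D(c_2(\lambda),\rho_2(\lambda))}$ with $\rho_1=\epsilon|e_1|$, $\rho_2\equiv\epsilon$, $d:=|c_1-c_2|=|g|$, and $\gamma(K)=\gamma(K_0)>0$.

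Next I would reduce the statement to a Laplacian computation. The capacity $\Phi$ is $1$‑homogeneous and real‑analytic on $\{d>\rho_1+\rho_2\}$, so with $q:=\log d$, $p:=\log\rho_1$ and $G(p,q):=\log\Phi(e^{p},\epsilon,e^{q})$ we have $u(\lambda):=\log\gamma(K_\lambda)=G\big(\log\epsilon+\log|e_1(\lambda)|,\ \log|g(\lambda)|\big)$. Since $\log|e_1|$ and $\log|g|$ are harmonic on $\DD$, the first‑order terms drop from the Laplacian:
\[
\Delta u=G_{pp}\,\big|\nabla\log|e_1|\big|^{2}+2G_{pq}\,\big(\nabla\log|e_1|\cdot\nabla\log|g|\big)+G_{qq}\,\big|\nabla\log|g|\big|^{2},
\]
with the derivatives of $G$ evaluated along the family. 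The goal is to exhibit one $\lambda$ with $\Delta u(\lambda)<0$ and one with $\Delta u(\lambda)>0$.

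At $\lambda_*=1/2$ the factor $\nabla\log|e_1|$ vanishes ($e_1'(\lambda_*)=0$), so $\Delta u(\lambda_*)=G_{qq}(\lambda_*)\,|\nabla\log|g|(\lambda_*)|^{2}$ with $|\nabla\log|g|(\lambda_*)|=|g'(\lambda_*)/g(\lambda_*)|>0$. Now $G_{qq}$ is the second $q$‑derivative of $\log\Phi$, and its sign along a subsequence of $q$'s is pinned down by two soft facts. First, applying the residue formula to the Ahlfors function $g$ of $\hat\CC\setminus K_\lambda$ and deforming $\oint_{|z|=R}g\,dz$ onto the two boundary circles gives $\Phi(\rho_1,\rho_2,d)<\rho_1+\rho_2$ for every admissible $d$ (equality would force $g$ to be a single simple rational function dominated by one disk, which is not extremal when $\rho_2>0$). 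Second, $\Phi(\rho_1,\rho_2,d)\to\rho_1+\rho_2$ as $d\to\infty$. Hence $q\mapsto\log\Phi$ is a bounded, non‑constant $C^{2}$ function that is not eventually constant, so it cannot be convex near $+\infty$; thus $G_{qq}<0$ at arbitrarily large $q$, and choosing $M$ so that the configuration at $\lambda_*$ realises one such value gives $\Delta u(\lambda_*)<0$, so $u$ is not subharmonic. At $\lambda=0$ one has $e_1'(0)\ne0$, hence $\nabla\log|e_1|(0)\ne0$ with $|\nabla\log|e_1|(0)|=1$; and since $\epsilon$ is small the configuration there lies deep in the regime $\rho_1,\rho_2\ll d$, where $\log\Phi\approx\log(\rho_1+\rho_2)$, so $G_{pp}\approx\partial_{p}^{2}\log(\rho_1+\rho_2)=\rho_1\rho_2/(\rho_1+\rho_2)^{2}=1/4$ at $\rho_1=\rho_2=\epsilon$, while $G_{pq},G_{qq}$ are small (the leading term is $d$‑independent). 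Therefore $\Delta u(0)=1/4+o(1)>0$, so $u$ is not superharmonic. Finally $\gamma(K_\lambda)=e^{u}$ and $\Delta(e^{u})=e^{u}(\Delta u+|\nabla u|^{2})$: at $\lambda=0$ this is positive, and at $\lambda_*$ one also has $\nabla u(\lambda_*)=G_q(\lambda_*)\nabla\log|g|(\lambda_*)$ small (again $\nabla\log|e_1|(\lambda_*)=0$), so $\Delta(e^{u})(\lambda_*)<0$. Hence both $\gamma(K_\lambda)$ and $\log\gamma(K_\lambda)$ are neither subharmonic nor superharmonic on $\DD$.

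The genuinely technical point — and the main obstacle — is the approximation $\log\Phi\approx\log(\rho_1+\rho_2)$ used at $\lambda=0$: it must hold in $C^{2}$, so that the second derivatives of $G$ there are controlled (in particular $G_{pp}\to 1/4$ and $G_{pq},G_{qq}\to0$ as $\epsilon\to0$). I expect to obtain this from a method‑of‑images expansion of the Ahlfors function of two well‑separated disks, or from the explicit theta‑function expression for the analytic capacity of a doubly‑connected domain, either of which also makes the error bookkeeping in the last step quantitative. The remaining ingredients — the S\l odkowski extension, the contour bound $\Phi<\rho_1+\rho_2$, and the convexity argument producing $G_{qq}<0$ at $\lambda_*$ — are routine.
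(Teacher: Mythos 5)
Your strategy (move two disjoint disks by a holomorphic motion with holomorphically varying radius and separation, and read the failure of sub/superharmonicity of $u(\lambda)=\log\gamma(K_\lambda)$ off the signs of $\Delta u$ via second derivatives of the two-disk capacity $\Phi(\rho_1,\rho_2,d)$) is coherent in outline, and the set-up (the motion of $K\cup\{\infty\}$, the S\l odkowski extension fixing $\infty$, the chain-rule identity for $\Delta u$ with harmonic $\log|e_1|$, $\log|g|$) is fine. But as written this is a plan, not a proof: every sign you need rests on second-derivative information about $\Phi$ that is never established. At $\lambda=0$ the positivity of $\Delta u$ is exactly the claim that $G_{pp}\to 1/4$ while $G_{pq},G_{qq}$ stay controlled, i.e.\ a $C^2$ version of $\Phi\approx\rho_1+\rho_2$ in the well-separated regime; you yourself flag this as ``the main obstacle'' and only say where you expect to find it. The same goes for the mere $C^2$ (or real-analytic) dependence of $\Phi$ on $(\rho_1,\rho_2,d)$, which is asserted without proof and is also needed at $\lambda_*$ to make sense of ``$G_{qq}<0$ at arbitrarily large $q$''. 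There is in addition a genuine flaw in the last step: at $\lambda_*$ you need $\Delta(e^u)<0$, i.e.\ $G_{qq}+G_q^2<0$, which is equivalent to $\partial_q^2\Phi<0$; your soft non-convexity argument only produces $\partial_q^2\log\Phi<0$, and the claim that $\nabla u(\lambda_*)$ is ``small'' is not justified at the particular value of $q$ that argument yields. (This point is repairable---run the boundedness/non-convexity argument on $\Phi$ itself, since $\partial_q^2\Phi<0$ implies $\partial_q^2\log\Phi<0$---but it has to be said, and the quantifier bookkeeping between $\epsilon$ and $M$ also needs care.) So there is a genuine gap: the quantitative heart of the argument is missing.

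For comparison, the paper's proof avoids all analysis of $\gamma$ near any explicit configuration and is entirely soft. It uses Astala's theorem (Lemma~\ref{L:thm2}) to produce a motion along which $\dim_H(K_\lambda)$ crosses the value $1$ on $[0,1)$, so that $\gamma(K_\lambda)=0$ for $\lambda\in[0,\delta)$ and $\gamma(K_\lambda)>0$ for $\lambda\in(\delta,1)$. Then: $\log\gamma(K_\lambda)$ is not superharmonic because it takes the value $-\infty$, and not subharmonic because it equals $-\infty$ on a segment without being identically $-\infty$; $\gamma(K_\lambda)$ is not superharmonic by the minimum principle; Rad\'o's criterion plus replacing $f(\lambda,z)$ by $e^{\alpha\lambda}f(\lambda,z)$ kills subharmonicity of $\gamma(K_\lambda)$; and a M\"obius change of base point arranges $\gamma(K)>0$. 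If you want to salvage your construction you would have to prove the $C^2$ expansion of $\Phi$ (e.g.\ from an explicit formula for the Ahlfors function of two disks), which is a substantial piece of work the proposal currently leaves out; its one advantage would be that $\gamma(K)>0$ holds from the start, with no base-point change needed.
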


\section{Proofs}

For the proof of Theorem~\ref{T:thm1}, we shall need two lemmas.
The first  is part of the so-called $\lambda$-lemma, 
due to Ma\~n\'e, Sad and Sullivan \cite[p.193]{MSS83}.

\begin{lemma}\label{L:MSS}
A holomorphic motion $f:\DD\times A\to\hat{\CC}$ is jointly continuous in $(\lambda,z)$.\qed
\end{lemma}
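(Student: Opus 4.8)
The plan is to prove the joint continuity by first replacing $f$ with a normalized motion $g$, obtained by post-composing each $f_\lambda$ with a M\"obius transformation $\phi_\lambda$ chosen to depend holomorphically on $\lambda$, so that Montel's theorem becomes available; one then extracts subsequential limits and identifies them using Hurwitz's theorem, the identification hinging on the injectivity of $z\mapsto f(\lambda,z)$ at the base parameter $\lambda=0$.

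First I would fix a point $(\lambda_0,z_0)\in\DD\times A$ at which to verify continuity. If $z_0$ is isolated in $A$, every sequence in $A$ tending to $z_0$ is eventually constant, and continuity at $(\lambda_0,z_0)$ reduces to the holomorphy of $\lambda\mapsto f(\lambda,z_0)$; so I may assume $z_0$ is not isolated, whence $A\setminus\{z_0\}$ is infinite and I can fix three distinct points $p_1,p_2,p_3\in A\setminus\{z_0\}$. For $\lambda\in\DD$ let $\phi_\lambda$ be the unique M\"obius transformation of $\hat\CC$ sending $f(\lambda,p_1),f(\lambda,p_2),f(\lambda,p_3)$ to $0,1,\infty$; since these three points are distinct and depend holomorphically on $\lambda$, so do $\phi_\lambda$ and $\phi_\lambda^{-1}$, and in particular they converge uniformly on $\hat\CC$ as $\lambda$ ranges over compact subsets of $\DD$. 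Set $g(\lambda,z):=\phi_\lambda(f(\lambda,z))$. Then $g(\cdot,z)$ is holomorphic on $\DD$ for each $z$, $g(\lambda,\cdot)$ is injective on $A$ for each $\lambda$, and $g(0,\cdot)=\phi_0$ on $A$; moreover, by injectivity, for each $z\in A\setminus\{p_1,p_2,p_3\}$ the map $g(\cdot,z)\colon\DD\to\hat\CC$ omits the values $0,1,\infty$. Hence, by Montel's theorem, the family $\{g(\cdot,z):z\in A\setminus\{p_1,p_2,p_3\}\}$ is normal in $\DD$, and any locally uniform limit of its members is either a holomorphic map into $\hat\CC\setminus\{0,1,\infty\}$ or a constant in $\{0,1,\infty\}$.

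Now let $(\lambda_n,z_n)\to(\lambda_0,z_0)$ in $\DD\times A$; the goal is $g(\lambda_n,z_n)\to g(\lambda_0,z_0)$, since applying the uniformly convergent inverses $\phi_{\lambda_n}^{-1}$ then gives $f(\lambda_n,z_n)\to f(\lambda_0,z_0)$. Dropping finitely many terms, assume $z_n\neq z_0$ and $z_n\notin\{p_1,p_2,p_3\}$, and pass to a subsequence along which $g(\cdot,z_n)\to h$ locally uniformly on $\DD$. Because $g(0,z_n)=\phi_0(z_n)\to\phi_0(z_0)=g(0,z_0)\notin\{0,1,\infty\}$, the limit $h$ is not a constant in $\{0,1,\infty\}$, so it maps $\DD$ into $\hat\CC\setminus\{0,1,\infty\}$; in particular $h$ and, since $z_0,z_n\neq p_3$, each of $g(\cdot,z_0)$ and $g(\cdot,z_n)$ is finite-valued, and $h(0)=g(0,z_0)$. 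The crucial step is now this: the functions $g(\cdot,z_n)-g(\cdot,z_0)$ are holomorphic and nowhere vanishing on $\DD$, because $g(\lambda,\cdot)$ is injective and $z_n\neq z_0$; so by Hurwitz's theorem their locally uniform limit $h-g(\cdot,z_0)$ is either nowhere vanishing or identically zero, and as it vanishes at $\lambda=0$ it is identically zero. Thus $h\equiv g(\cdot,z_0)$. Every subsequential limit being $g(\cdot,z_0)$, the full sequence $g(\cdot,z_n)$ converges to $g(\cdot,z_0)$ locally uniformly on $\DD$, and evaluating at $\lambda_n\to\lambda_0$ yields $g(\lambda_n,z_n)\to g(\lambda_0,z_0)$, which completes the argument.

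The step I expect to be the real obstacle is the identification $h\equiv g(\cdot,z_0)$: Montel's theorem only delivers subsequential holomorphic limits with no control over which one arises, and it is precisely the injectivity of $z\mapsto f(\lambda,z)$, used at $\lambda=0$ via Hurwitz's theorem, that forces the limit to be the correct function. A secondary point that needs care, though it is routine, is verifying the holomorphic (hence locally uniform) dependence of $\phi_\lambda$ and $\phi_\lambda^{-1}$ on $\lambda$, which is what makes Montel's theorem applicable and lets one pass back from $g$ to $f$; here one keeps track of the value $\infty$ using the standard charts on $\hat\CC$. The remaining ingredients --- reducing to a non-isolated base point, moving between the spherical and Euclidean metrics once the limit is known to avoid $\infty$, and the estimates for composing uniformly convergent families --- are straightforward.
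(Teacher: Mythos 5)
The paper itself contains no proof of this lemma: it is quoted, with a reference, from Ma\~n\'e--Sad--Sullivan \cite{MSS83} as part of the $\lambda$-lemma, so there is no in-paper argument to compare against. Your proof is essentially the standard MSS argument (they normalize with cross-ratios, which amounts to your M\"obius normalization $\phi_\lambda$), and it is correct: the normalization makes Montel's theorem applicable to the family $\{g(\cdot,z)\}$ omitting $0,1,\infty$; the observation $g(0,z_n)=\phi_0(z_n)\to\phi_0(z_0)\notin\{0,1,\infty\}$ rules out the degenerate constant limits; and Hurwitz's theorem applied to the nowhere-vanishing differences $g(\cdot,z_n)-g(\cdot,z_0)$, whose limit vanishes at $\lambda=0$, forces every subsequential limit to be $g(\cdot,z_0)$, after which normality upgrades this to convergence of the full sequence and evaluation at $\lambda_n$ finishes the proof. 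One small slip: you cannot arrange $z_n\neq z_0$ by discarding finitely many terms, since $z_n=z_0$ may occur infinitely often; but that subsequence is handled trivially, because along it $g(\lambda_n,z_n)=g(\lambda_n,z_0)\to g(\lambda_0,z_0)$ by continuity of the single holomorphic map $\lambda\mapsto g(\lambda,z_0)$, so splitting the sequence in two repairs the argument without any new idea. The remaining care points you flag (holomorphic dependence of $\phi_\lambda$ and $\phi_\lambda^{-1}$ on $\lambda$, and passing from spherical to Euclidean locally uniform convergence once the limit is known to omit $\infty$) are indeed routine and correctly identified.
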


The second lemma is a simple result about 
how the analytic capacity of a compact set 
transforms under conformal mapping of the complement.

\begin{lemma}\label{L:conf}
Let $K$ and $L$ be compact subsets of $\CC$ 
and let $h:\hat{\CC}\setminus K\to\hat{\CC}\setminus L$ 
be a conformal mapping such that $h(\infty)=\infty$. 
If $\gamma(K)>0$, then also $\gamma(L)>0$ and, for all $R>\max_{z\in K}|z|$, 
\[
\frac{\gamma(K)}{\gamma(L)}
=\Bigl|\frac{1}{2\pi i}\int_{|z|=R} \frac{h(z)}{z^2}\,dz\Bigr|.
\]
\end{lemma}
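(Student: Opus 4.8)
The plan is to exploit the extremal function for analytic capacity together with the fact that $h$ is conformal near $\infty$ and behaves there like $z\mapsto az+O(1)$ with $a=\lim_{z\to\infty}h(z)/z$. First I would record the normalized expansion of $h$ at infinity: since $h:\hat\CC\setminus K\to\hat\CC\setminus L$ is conformal with $h(\infty)=\infty$, near $\infty$ we have $h(z)=az+b+c_1/z+\cdots$ with $a\neq 0$. The key observation is that for $R>\max_{z\in K}|z|$ the contour $|z|=R$ lies in the domain of $h$, the expansion is valid there, and the residue computation gives $\frac{1}{2\pi i}\int_{|z|=R}h(z)/z^2\,dz = a$. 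So the lemma reduces to showing $\gamma(L)>0$ and $\gamma(K)/\gamma(L)=|a|$.

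Next I would transport extremal functions through $h$. If $g:\hat\CC\setminus L\to\DD$ is holomorphic, then $g\circ h:\hat\CC\setminus K\to\DD$ is holomorphic, and since $h$ is a bijection of the complements fixing $\infty$ with $h'(\infty)$-type coefficient $a$, one computes $(g\circ h)'(\infty)=\lim_{z\to\infty}z\bigl(g(h(z))-g(\infty)\bigr)$. Writing $w=h(z)\to\infty$ and using $z= w/a + O(1)$, this limit equals $\frac{1}{a}\lim_{w\to\infty}w\bigl(g(w)-g(\infty)\bigr)=g'(\infty)/a$. Taking suprema over admissible $g$ gives $\gamma(K)\ge |g'(\infty)|/|a|$ for every such $g$, hence $\gamma(K)\ge \gamma(L)/|a|$. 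The same argument applied to $h^{-1}:\hat\CC\setminus L\to\hat\CC\setminus K$, whose leading coefficient at $\infty$ is $1/a$, yields the reverse inequality $\gamma(L)\ge \gamma(K)|a|$, i.e.\ $\gamma(K)\le \gamma(L)/|a|$. Combining, $\gamma(K)=\gamma(L)/|a|$, which is the asserted identity once we also know $\gamma(L)>0$. Positivity of $\gamma(L)$ follows from the inequality $\gamma(L)\ge |a|\gamma(K)>0$ together with $a\neq 0$; alternatively it is immediate from the fact that $\gamma(L)/\gamma(K)=|a|$ is a finite nonzero number.

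The main technical point to be careful about is the behavior of $h$ at $\infty$ and the justification that $g\circ h$ and $g$ have comparable derivatives at infinity. Since $h$ is conformal on a punctured neighbourhood of $\infty$ and $h(\infty)=\infty$, the singularity at $\infty$ is removable for $1/h(1/\zeta)$ in the usual chart, giving the Laurent expansion $h(z)=az+b+O(1/z)$ with $a\neq 0$; this is where one uses that $h$ is genuinely conformal (injective holomorphic) rather than merely holomorphic. Everything else is a routine residue calculation and a change of variables $w=h(z)$ in the limit defining $g'(\infty)$. I would present the residue identity $\frac{1}{2\pi i}\int_{|z|=R}h(z)/z^2\,dz=a$ explicitly, then state the two-sided capacity comparison $\gamma(K)=|a|^{-1}\gamma(L)$, and conclude.
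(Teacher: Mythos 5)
Your plan follows the same route as the paper's proof: expand $h(z)=az+b+O(1/z)$ at infinity, transport admissible functions by composing with $h$ and with $h^{-1}$ to get a two-sided comparison of capacities, and identify the contour integral with the leading coefficient $a$. The one place where you and the paper part company is the transformation rule for the derivative at infinity, and this is exactly where your write-up has an unresolved inconsistency. You compute $(g\circ h)'(\infty)=g'(\infty)/a$, and carrying this through you obtain $\gamma(K)\ge\gamma(L)/|a|$ and $\gamma(L)\ge|a|\gamma(K)$, hence $\gamma(K)/\gamma(L)=1/|a|$. You then declare this to be ``the asserted identity.'' It is not: the display in the statement reads $\gamma(K)/\gamma(L)=|a|$, the reciprocal of what you derived, and nowhere do you reconcile the two. (The paper's own proof instead asserts $(g\circ h)'(\infty)=ag'(\infty)$, which is what produces the ratio in the displayed form.) As submitted, your argument therefore does not prove the statement as displayed, and the mismatch needed to be flagged and resolved rather than passed over.

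On the substance, your computation of the derivative at infinity is the correct one: if $g(w)=g(\infty)+g'(\infty)/w+\cdots$ and $h(z)=az+b+\cdots$, then $z\,(g(h(z))-g(\infty))\to g'(\infty)/a$. A concrete check: $h(z)=2z$, $K=\overline{\DD}$, $L=\overline{D(0,2)}$ gives $\gamma(K)=1$, $\gamma(L)=2$, while $\frac{1}{2\pi i}\int_{|z|=R}h(z)z^{-2}\,dz=2$; so the identity holds with the ratio written as $\gamma(L)/\gamma(K)=|a|$, and the displayed form (together with the step $(g\circ h)'(\infty)=ag'(\infty)$ in the paper's proof) has the ratio inverted. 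This inversion is immaterial for Theorem~\ref{T:thm1}, since only the harmonicity of $\lambda\mapsto\log\gamma(K_\lambda)$ is used and $\log(1/|a|)=-\log|a|$, but a correct write-up of the lemma must either state $\gamma(L)/\gamma(K)$ on the left or replace $a$ by $1/a$ on the right, and your proof should say so explicitly. One further small point: to integrate the Laurent expansion term by term over $|z|=R$ you should note that the series converges in the entire annulus $\{|z|>\max_{z\in K}|z|\}$, not merely ``near $\infty$'' (alternatively, deform the contour to a large radius $R'$ and let $R'\to\infty$, as the paper does); with that remark your evaluation of the integral as $a$ is fine.
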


\begin{proof}
Under the hypotheses on $h$, we have $h(z)=az+O(1)$ as $|z|\to\infty$, 
where $a\in\CC\setminus\{0\}$. Given a holomorphic function 
$g:\hat{\CC}\setminus L\to\DD$, 
the composition $g\circ h$ is a holomorphic map 
from $\hat{\CC}\setminus K$ to $\DD$ with $(g\circ h)'(\infty)=ag'(\infty)$. Hence 
\[
|g'(\infty)|=|(g\circ h)'(\infty)|/|a|\le \gamma(K)/|a|.
\]
Taking the supremum over all such $g$, we deduce that $\gamma(L)\le \gamma(K)/|a|$. 

Applying the same argument to the inverse map
$h^{-1}:\hat{\CC}\setminus L\to\hat{\CC}\setminus K$, which satisfies
$h^{-1}(z)=a^{-1}z+O(1)$ as $|z|\to\infty$, we obtain $\gamma(K)\le\gamma(L)|a|$, 
and hence $\gamma(K)/\gamma(L)=|a|$.

Finally, to evaluate $a$, we observe that, by Cauchy's theorem, if $R>\max_{z\in K}|z|$, then
\[
\int_{|z|=R} \frac{h(z)}{z^2}\,dz
=\int_{|z|=R'} \frac{h(z)}{z^2}\,dz \quad(R'>R),
\]
and hence
\[
\int_{|z|=R} \frac{h(z)}{z^2}\,dz
=\lim_{R'\to\infty}\int_{|z|=R'} \frac{h(z)}{z^2}\,dz
=\lim_{R'\to\infty}\int_{|z|=R'} \frac{az+O(1)}{z^2}\,dz
=2\pi ia.\]
The result follows.
\end{proof}

\begin{proof}[Proof of Theorem \ref{T:thm1}.] 
Fix $R>\max_{z\in K}|z|$. By Lemma~\ref{L:conf}, 
applied to the conformal  mapping $f_\lambda:\hat{\CC}\setminus K\to\hat{\CC}\setminus K_\lambda$,
we have $\gamma(K_\lambda)>0$ for all $\lambda\in\DD$ and
\begin{equation}\label{E:thm1}
\frac{\gamma(K)}{\gamma(K_\lambda)}
=\Bigl|\frac{1}{2\pi i}\int_{|z|=R} \frac{f(\lambda, z)}{z^2}\,dz\Bigr|
\quad(\lambda\in\DD).
\end{equation}
By Lemma~\ref{L:MSS}, the map $(\lambda,z)\mapsto f(\lambda,z)$
is continuous. Also,  it is holomorphic in $\lambda$ (and finite-valued) for each fixed $z$
with $|z|=R$. It follows easily that the integral in \eqref{E:thm1} is a holomorphic function of $\lambda$. 
Since the integral  does not take the value zero, the log of its modulus is a harmonic function.
It follows that $\log\gamma(K_\lambda)$ is a harmonic function of $\lambda$.
\end{proof}

We now turn to Theorem~\ref{T:thm2}. For this, we need the following result of Astala \cite{As94}.
Here and in what follows, $\dim_H$ denotes the Hausdorff dimension.

\begin{lemma}\label{L:thm2}
Given $t\in(0,2)$, there exist a holomorphic motion $f:\DD\times\hat{\CC}\to\hat{\CC}$ satisfying $f_\lambda(\infty)=\infty$ 
for all $\lambda\in\DD$, and a compact subset $K$ of $\CC$, such that, writing $K_\lambda:=f_\lambda(K)$, we have
\begin{equation}\label{E:HD}
\dim_H(K_\lambda)=\frac{2t}{t+(2-t)(1-\lambda)/(1+\lambda)} 
\quad(0\le \lambda<1).
\end{equation}
\end{lemma}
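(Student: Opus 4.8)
The plan is to realize the maps $f_\lambda$ as normalized solutions of a linear family of Beltrami equations and to read off the dimension formula from Astala's sharp distortion theorem, taking $K$ to be one of Astala's self-similar extremizers. Concretely, I would fix a measurable function $\nu$ on $\CC$ with $|\nu|=1$ almost everywhere on a suitable region (and $\nu=0$ elsewhere), and for each $\lambda\in\DD$ let $f_\lambda:\hat{\CC}\to\hat{\CC}$ be the unique quasiconformal homeomorphism solving
\[
\partial_{\bar z}f_\lambda=\lambda\,\nu\,\partial_z f_\lambda
\]
normalized to fix $0$, $1$ and $\infty$. Since $\|\lambda\nu\|_\infty=|\lambda|<1$, the equation is solvable, and by the Ahlfors--Bers theorem the map $\lambda\mapsto f_\lambda(z)$ is holomorphic on $\DD$ for each fixed $z$. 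As each $f_\lambda$ is a homeomorphism with $f_0=\mathrm{id}$, the map $f(\lambda,z):=f_\lambda(z)$ satisfies the three defining conditions (i)--(iii) and is a holomorphic motion of $\hat{\CC}$ fixing $\infty$. Moreover $f_\lambda$ is $K(\lambda)$-quasiconformal with $K(\lambda)=(1+|\lambda|)/(1-|\lambda|)$.

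Next I would invoke Astala's dimension distortion theorem \cite{As94}: a $K$-quasiconformal map sends a compact set of Hausdorff dimension $d\le 2$ to one of dimension at most $\tfrac{2Kd}{2+(K-1)d}$, equivalently
\[
\frac{1}{\dim_H f(E)}-\frac12\ge\frac1K\Bigl(\frac{1}{\dim_H E}-\frac12\Bigr),
\]
and this bound is attained. A direct substitution shows that the target value in \eqref{E:HD} is exactly the extremal one: writing $s=(1-\lambda)/(1+\lambda)=1/K(\lambda)$, the right-hand side of \eqref{E:HD} satisfies
\[
\frac{1}{\dim_H K_\lambda}-\frac12=s\Bigl(\frac1t-\frac12\Bigr)=\frac{1}{K(\lambda)}\Bigl(\frac1t-\frac12\Bigr).
\]
Hence it suffices to exhibit a single compact set $K$ with $\dim_H K=t$ and a single $\nu$ for which equality holds in Astala's bound simultaneously for every $\lambda\in[0,1)$.

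For this I would use Astala's self-similar construction: take $K$ to be a self-similar Cantor set generated by an iterated function system, and take $\nu$ to have the matching scale-invariant structure. The point of this choice is that each image $K_\lambda=f_\lambda(K)$ is again a self-similar Cantor set whose contraction ratios are explicit functions of $\lambda$, so that its dimension can be computed exactly from the self-similarity dimension formula. Calibrating the ratios so that $\dim_H K=t$ at $\lambda=0$ and so that they vary with $\lambda$ according to the extremal relation above then yields \eqref{E:HD} throughout $[0,1)$.

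The main obstacle is the construction and verification of the extremizer, which is the heart of Astala's paper. His distortion theorem supplies the upper bound on $\dim_H K_\lambda$ for free, so the genuine work is the matching \emph{lower} bound: one must show that $\dim_H K_\lambda$ is no smaller than the claimed value, and do so uniformly across the whole parameter range rather than at a single $\lambda$. I would carry this out by placing a Frostman-type measure on $K_\lambda$ (or by exploiting the exact self-similarity) and controlling its local scaling under $f_\lambda$; the delicate point is that the resulting estimate must hold as an equality for every $\lambda\in[0,1)$, which is precisely what the self-similar design of $\nu$ is engineered to guarantee.
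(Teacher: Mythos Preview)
Your proposal and the paper's proof are essentially the same: both amount to invoking Astala's extremal construction from \cite{As94}. You sketch the mechanism (linear Beltrami family, Ahlfors--Bers holomorphic dependence, self-similar extremizer), whereas the paper simply cites \cite[p.54]{As94} for the existence of the motion and the dimension formula and then makes two small adjustments you do not mention explicitly. First, Astala's actual extremizer is not a single self-similar Cantor set but a countable union $\cup_k J_k$ of pieces sitting in pairwise disjoint disks $D_k$; this union is not compact, so the paper stipulates that the $D_k$ accumulate only at $0$, takes $K=\overline{\cup_k J_k}=\cup_k J_k\cup\{0\}$, and observes that adjoining a single point leaves $\dim_H$ unchanged. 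Second, the paper notes that one may arrange $f_\lambda(\infty)=\infty$, which you already build in via the normalization. These are minor bookkeeping points rather than a different argument; your outline is otherwise on target, with the honest caveat you yourself flag, that the genuine content---the sharp lower bound across all $\lambda$---is exactly what Astala's paper supplies.
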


\begin{proof}
Essentially this is proved in \cite[p.54]{As94}. In fact, what is shown there is that, 
given a sequence of pairwise disjoint disks $(D_k)_{k\ge1}$ inside the unit disk,  
there exist a holomorphic motion  $f:\DD\times\hat{\CC}\to\hat{\CC}$ 
and compact sets $(J_k)_{k\ge1}$ such that $f_\lambda(J_k)\subset D_k$ for all $k\ge1$ and all $\lambda\in\DD$, and
\[
\dim_H(f_\lambda(\cup_k J_k))=\frac{2t}{t+(2-t)(1-\lambda)/(1+\lambda)} 
\quad(0\le \lambda<1).
\]
It is easy to see that, in addition, $f$ may be chosen so that $f_\lambda(\infty)=\infty$ for all $\lambda\in\DD$.
If we further stipulate that the disks $D_k$ accumulate only at $0$, 
then $\overline{\cup_kJ_k}=\cup_kJ_k\cup\{0\}$
and $f_\lambda(0)=0$ for all $\lambda\in\DD$.
Thus, setting $K:= \overline{\cup_kJ_k}$ and $K_\lambda:=f_\lambda(K)$, 
we have $K_\lambda=f_\lambda(\cup_kJ_k)\cup\{0\}$ for all $\lambda\in\DD$.
Since the addition of a single point does not affect the Hausdorff dimension, it follows that \eqref{E:HD} holds.
\end{proof}

\begin{proof}[Proof of Theorem~\ref{T:thm2}]
Fix $t\in(0,1)$ and choose $f$ and $K$ as in Lemma~\ref{L:thm2}.
Then there exists $\delta\in(0,1)$ such that
$\dim_H(K_\lambda)<1$ for $\lambda\in[0,\delta)$ and $\dim_H(K_\lambda)>1$ for $\lambda\in(\delta,1)$.
Now it is a well-known property of analytic capacity of compact sets that $\dim_H<1$ implies $\gamma=0$
and that $\dim_H>1$ implies $\gamma>0$ (see for example \cite[p.34]{To14}). Thus we have
\[
\gamma(K_\lambda)
\begin{cases}
=0, &0\le \lambda<\delta,\\
>0, &\delta<\lambda<1.
\end{cases}
\]

In particular $\log\gamma(K_\lambda)=-\infty$ on $[0,\delta)$.
This straightaway rules out the possibility that $\log\gamma(K_\lambda)$ be superharmonic,
since superharmonic functions never take the value $-\infty$. 

It also shows that $\log\gamma(K_\lambda)$
cannot be subharmonic on $\DD$, because a subharmonic function that takes the value $-\infty$ on a line segment 
must be equal to $-\infty$ everywhere in its domain (see for example \cite[Exercise~2.5.1]{Ra95}),
and in our case  $\log \gamma(K_\lambda)>-\infty$ if $\lambda\in(\delta,1)$.

It is also easy to see that $\lambda\mapsto\gamma(K_\lambda)$ is not superharmonic on $\DD$.
Indeed, it attains a minimum without being constant, thus violating the minimum principle for
superharmonic functions. 

To treat the question of whether $\lambda\mapsto\gamma(K_\lambda)$ is
subharmonic, we invoke the following criterion due to Rad\'o
\cite[\S3.12]{Ra71}: given a non-negative function $u(\lambda)$, then $\log u(\lambda)$ is  subharmonic if and only if $|e^{\alpha\lambda}|u(\lambda)$ is subharmonic for each $\alpha\in\RR$.
Since we know that $\log\gamma(K_\lambda)$ is not subharmonic, 
by Rad\'o's criterion there exists $\alpha\in\RR$ such that 
$|e^{\alpha\lambda}|\gamma(K_\lambda)$ is not subharmonic.
Thus, if we replace $f(\lambda,z)$ by the holomorphic motion $e^{\alpha\lambda}f(\lambda,z)$,
which has the effect of replacing $K_\lambda$ by $e^{\alpha\lambda}K_\lambda$,
we obtain an example for which, in addition to all the other properties already established,
 $\lambda\mapsto\gamma(K_\lambda)$ is not subharmonic on~$\DD$.

This nearly proves the theorem. The only item lacking is that $\gamma(K)=0$, instead of $\gamma(K)>0$
as promised. To get round this, it is enough to change the base point of the holomorphic motion, as follows.
Fix $\lambda_0\in(\delta,1)$ and define
\[
\tilde{f}(\lambda, z):=f\Bigl(\frac{\lambda_0-\lambda}{1-\lambda\lambda_0}, f_{\lambda_0}^{-1}(z)\Bigr)
\quad\text{and}\quad
\tilde{K}:=K_{\lambda_0}.
\]
Then $\tilde{f}$ is a holomorphic motion, and $\tilde{K}_\lambda:=\tilde{f}_\lambda(\tilde{K})=K_{(\lambda_0-\lambda)/(1-\lambda\lambda_0)}$ for all $\lambda\in\DD$.
Thus the modified pair $\tilde{f},\tilde{K}$ satisfies all the requirements of the theorem.
\end{proof}

\end{document}